\def\NZQ{\mathbb}               
\def\NN{{\NZQ N}}
\def\opn#1#2{\def#1{\operatorname{#2}}} 
\opn\chara{char} \opn\length{\ell} \opn\pd{pd} \opn\rk{rk}
\opn\projdim{proj\,dim} \opn\injdim{inj\,dim} \opn\rank{rank}
\opn\depth{depth} \opn\codepth{codepth} \opn\grade{grade}
\opn\height{height} \opn\embdim{emb\,dim} \opn\codim{codim}
\opn\Tr{Tr} \opn\bigrank{big\,rank}
\opn\superheight{superheight}\opn\lcm{lcm}
\opn\trdeg{tr\,deg}%
\opn\reg{reg} \opn\lreg{lreg} \opn\skel{skel} \opn\Gr{Gr}
\opn\dim{dim} \opn\arithdeg{arithdeg}
\opn\div{div} \opn\Div{Div} \opn\cl{cl} \opn\Cl{Cl}
\opn\Spec{Spec} \opn\Supp{Supp} \opn\supp{supp} \opn\Sing{Sing}
\opn\Ass{Ass}
\opn\Ann{Ann} \opn\Rad{Rad} \opn\Soc{Soc}
\opn\Sym{Sym} \opn\Ker{Ker} \opn\Coker{Coker} \opn\Im{Im}
\opn\Hom{Hom} \opn\Tor{Tor} \opn\Ext{Ext} \opn\End{End}
\opn\Aut{Aut} \opn\id{id} \opn\ini{in} \opn\tr{tr}
\opn\nat{nat}\opn\it{it}
\opn\pff{proof}
\opn\Pf{proof} \opn\GL{GL} \opn\SL{SL} \opn\mod{mod} \opn\ord{ord}
\opn\aff{aff} \opn\con{conv} \opn\relint{relint} \opn\st{st}
\opn\lk{lk} \opn\cn{cn} \opn\core{core} \opn\vol{vol}
\opn\link{link} \opn\star{star} \opn\skel{skel}
\opn\gr{gr}
\def\pot#1#2{#1[\kern-0.28ex[#2]\kern-0.28ex]}
\opn\dirlim{\underrightarrow{\lim}}
\opn\inivlim{\underleftarrow{\lim}}
\let\to=\rightarrow
\def\Implies{\ifmmode\Longrightarrow \else
     \unskip${}\Longrightarrow{}$\ignorespaces\fi}
\def\implies{\ifmmode\Rightarrow \else
     \unskip${}\Rightarrow{}$\ignorespaces\fi}
\def\iff{\ifmmode\Longleftrightarrow \else
     \unskip${}\Longleftrightarrow{}$\ignorespaces\fi}
\newtheorem{thm}{Theorem}[section]
\newtheorem{cor}[thm]{Corollary}
\newtheorem{lem}[thm]{Lemma}
\newtheorem{prop}[thm]{Proposition}
\theoremstyle{definition}
\newtheorem{defn}[thm]{Definition}
\theoremstyle{remark}
\newtheorem{rem}[thm]{Remark}
\newtheorem*{ex}{Example}
\numberwithin{equation}{section}
\newtheorem{Claim}[thm]{Claim}
\let\epsilon\varepsilon
\let\phi=\varphi
\let\kappa=\varkappa
\def\qed{\ifhmode\textqed\fi
   \ifmmode\ifinner\quad\qedsymbol\else\dispqed\fi\fi}
\def\textqed{\unskip\nobreak\penalty50
    \hskip2em\hbox{}\nobreak\hfil\qedsymbol
    \parfillskip=0pt \finalhyphendemerits=0}
\def\dispqed{\rlap{\qquad\qedsymbol}}
\opn\Gin{Gin}
\def\FF{{\mathcal F}}
\opn\inii{in} \opn\inim{inm} \opn\rate{rate}
\begin{document}

\title[Binomial edge ideals with quadratic Gr\"{o}bner bases]
 {Binomial edge ideals with quadratic\\ Gr\"{o}bner bases}


\author[Marilena Crupi]{Marilena Crupi}
\address[Marilena Crupi]{Dipartimento di Matematica,
Universit\`a di Messina\\ Viale Ferdinando Stagno d'Alcontres, 31\\ 98166 Messina, Italy.}
\email{mcrupi@unime.it}

\author[Giancarlo Rinaldo]{Giancarlo Rinaldo}
\address[Giancarlo Rinaldo]{Dipartimento di Matematica\\
Universit\`a di Messina\\ Viale Ferdinando Stagno d'Alcontres, 31\\ 98166
Messina, Italy}
\email{giancarlo.rinaldo@tiscali.it}

\subjclass[2000]{Primary 13C05. Secondary 05C25}

\keywords{Gr\"obner bases, binomial ideals, graphs}



\begin{abstract}
 We prove that  a binomial edge ideal of a graph $G$ has a quadratic Gr\"{o}bner basis with respect to some term order if and only if the graph  $G$ is closed  with respect to a given labelling of the vertices (\cite{HH}). We also state some criteria for the closedness of a graph $G$ that do not depend necessarily from the labelling of its vertex set.
\end{abstract}

\maketitle

\section*{Introduction}

In this article a graph $G$ means a simple graph without isolated vertices, loops and multiple edges. Let $V(G)=[n] =\{1, \ldots, n\}$ denote the set of vertices and $E(G)$ the set of edges.

One of the main objects of study in combinatorial commutative algebra
is the edge ideal of a graph $G$ which is generated by the monomials $x_ix_j$, where $\{i, j\}$ is an edge of $G$, in 
the polynomial ring $K[x_1,\ldots,x_n]$ over the field $K$. Edge ideals of a graph has been introduced by Villarreal in 1990, \cite{Vi2}, where he studied the Cohen-Macaulay property of such ideals. Many authors have focalized their attention on such ideals (see for example \cite{SVV}, \cite{HH1},\cite{SF1}, \cite{CRT}). 

In 2010, binomial edge ideals were introduced in \cite{HH} and appear independently, but at the same time, also in \cite{MO}. Let $S = K[x_1,\cdots, x_n, y_1,\cdots, y_n]$ be the polynomial ring in $2n$ variables with coefficients in a field $K$. For $i < j$, set $f_{ij} = x_iy_j - x_jy_i$. The ideal $J_G$ of $S$ generated by the binomials $f_{ij} = x_iy_j - x_jy_i$ such that $i<j$ and $\{i,j\}$ is an edge of $G$, is called \textit{the binomial edge ideal} of $G$.

 Such class of ideals is a natural generalization of the ideal of $2$-minors of a $2\times n$-matrix of indeterminates. Really,
the ideal of $2$-minors of a $2\times n$-matrix may be considered as the binomial edge ideal of a complete graph on $[n]$.
Moreover the binomial edge ideal of a line graph, that can be interpreted as an ideal of adjacent minors, has been examined in \cite{DES}.
The importance of such class of binomial edge ideals for algebraic statistics is unquestionable \cite{HH}. Indeed these ideals arise naturally in the study of conditional independence statements \cite{DSS}. Many algebraic properties of binomial edge ideals in terms of properties of the underlying graph were studied in \cite{HH} and \cite{HEH}.

In \cite{HH}, Theorem 1.1, the authors proved the following:
\begin{thm}\label{th:HH}
 Let $G$ be a graph on the vertex set [n], and let $<$ the lexicographic order induced by $x_1>\cdots>x_n>y_1>\cdots>y_n$ on $S$. Then the following conditions are equivalent:
\begin{enumerate}
 \item The generators $f_{ij}$ of $J_G$ form a quadratic Gr\"obner basis.
 \item For all edges $\{i,j\}$ and $\{k, \ell\}$ with $i<j$ and $k<\ell$ one has $\{j,\ell\}\in E(G)$ if $i=k$, and $\{i,k\}\in E(G)$ if $j=\ell$.
\end{enumerate}
\end{thm}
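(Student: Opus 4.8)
The plan is to apply Buchberger's criterion directly to the generating set $\{f_{ij}:\{i,j\}\in E(G),\ i<j\}$ of $J_G$. The first step is to record the initial terms: since $x_i>x_j$ in the given order whenever $i<j$, one has $\operatorname{in}_<(f_{ij})=x_iy_j$ for each edge $\{i,j\}$ with $i<j$. Two such monomials $x_iy_j$ and $x_ky_\ell$ are coprime unless $i=k$ or $j=\ell$ (an $x$-variable can never coincide with a $y$-variable), so by Buchberger's product criterion the only S-polynomials that might fail to reduce to $0$ come from pairs of distinct edges sharing their smaller endpoint ($i=k$) or their larger endpoint ($j=\ell$).

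Next I would compute these S-polynomials explicitly. For edges $\{i,j\}$ and $\{i,\ell\}$ with, say, $i<j<\ell$, a direct expansion gives
\[
S(f_{ij},f_{i\ell})=y_\ell f_{ij}-y_j f_{i\ell}=-y_i\,(x_jy_\ell-x_\ell y_j)=-y_i\,f_{j\ell},
\]
where $f_{j\ell}=x_jy_\ell-x_\ell y_j$ is the corresponding binomial of $S$, whether or not $\{j,\ell\}$ is an edge, and the leading monomial of the S-polynomial is $x_jy_iy_\ell$. Symmetrically, for edges $\{i,j\}$ and $\{k,j\}$ with $i<k<j$,
\[
S(f_{ij},f_{kj})=x_k f_{ij}-x_i f_{kj}=x_j\,(x_iy_k-x_ky_i)=x_j\,f_{ik},
\]
with leading monomial $x_ix_jy_k$.

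For the implication $(2)\Rightarrow(1)$: assuming the combinatorial condition, in the first case $\{j,\ell\}\in E(G)$, so $f_{j\ell}$ is a generator, $\operatorname{in}_<(f_{j\ell})=x_jy_\ell$ divides $x_jy_iy_\ell$, and $S(f_{ij},f_{i\ell})=-y_if_{j\ell}$ reduces to $0$ in one step; likewise in the second case $\{i,k\}\in E(G)$, $f_{ik}$ is a generator with $\operatorname{in}_<(f_{ik})=x_iy_k$ dividing $x_ix_jy_k$, and $S(f_{ij},f_{kj})=x_jf_{ik}$ reduces to $0$. Buchberger's criterion then shows the $f_{ij}$ form a Gröbner basis, which is quadratic because every generator has degree $2$. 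For $(1)\Rightarrow(2)$: suppose the $f_{ij}$ form a Gröbner basis but some distinct edges $\{i,j\}$, $\{i,\ell\}$ (say $i<j<\ell$) have $\{j,\ell\}\notin E(G)$. Then $S(f_{ij},f_{i\ell})$ is a nonzero element of $J_G$ whose leading monomial $x_jy_iy_\ell$ is divisible by an initial term $\operatorname{in}_<(f_{pq})=x_py_q$ of a generator only if $p=j$ and $q\in\{i,\ell\}$; the inequality $p<q$ rules out $q=i$, and $q=\ell$ would force $\{j,\ell\}\in E(G)$, contrary to assumption. Hence the S-polynomial cannot even take a first reduction step, contradicting that in a Gröbner basis it must reduce to $0$. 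The case $j=\ell$ is handled the same way, using that the leading monomial $x_ix_jy_k$ of $S(f_{ij},f_{kj})$ is divisible, among the relevant initial terms, only by $x_iy_k$. This proves $(2)$.

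The only mildly delicate point is this last divisibility analysis: one must check that the leading monomial of each nontrivial S-polynomial is divisible by no initial term of a generator except the obvious one, and it is precisely here that the orientation inequalities $i<j$ and $k<\ell$ are used. Everything else is bookkeeping — reducing to the ordered cases $i<j<\ell$ and $i<k<j$ by symmetry, together with the two displayed binomial identities, which are elementary syzygies among the $2\times2$ minors $f_{ij}$.
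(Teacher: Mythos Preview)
Your argument is correct: the S-polynomial computations are right, the divisibility analysis for $(1)\Rightarrow(2)$ is clean, and the use of Buchberger's product criterion to discard the coprime case is appropriate. Note, however, that the paper does not give its own proof of this theorem; it is quoted from \cite{HH}, Theorem~1.1, and only invoked (e.g.\ in the proof of Theorem~\ref{th:qua}). Your proof is exactly the standard Buchberger-criterion argument one would expect for this result, so there is nothing to compare against in the present paper.
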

The authors in \cite{HH}, called a graph $G$ on $[n]$ \textit{closed with respect to the given labelling of the vertices} if $G$ satisfies condition (2). The term closed graph is not standard terminology in graph theory. Nevertheless this class of graphs is related to a well-know class of graphs: the chordal graphs. A closed graph is chordal (\cite{HH}) but the converse is not true. Indeed a closed graph is a claw-free chordal graph, where for a claw we mean a graph with three different edges $e_1,e_2,e_3$ such that  $e_1\cap e_2 \cap e_3 \neq \emptyset$.

In Theorem \ref{th:HH} the role of the lexicographic order on $S$ is fundamental.
In this article we are able to state that the existence of a quadratic Gr\"{o}bner basis for $J_G$ is not related to the lexicographic order on $S$. In fact, one of the main result in the paper assures
that the closed graphs are the only graphs for which the binomial edge ideal $J_G$ has a quadratic Gr\"{o}bner basis with respect to some
term order on $S$ (Theorem \ref{th:qua}).  Our result underlines also the relation between binomial edge ideals and edge ideals. In fact as a consequence we obtain that $J_G$ has a quadratic Gr\"obner basis with respect to some term order $\prec$ on $S$ if and only if  $\ini(J_G)$ is the edge ideal of a bipartite graph with bipartition $V_1 =\{x_1,\cdots, x_n\}$ and $V_2 =\{y_1,\cdots, y_n\}$. The strict relation between algebraic invariants of an ideal $J$ and $\ini(J)$ is well known (see for example \cite{DE}, Chapter 15).
\par\noindent


Furthermore Theorem \ref{th:HH} and Theorem \ref{th:qua} suggest that it would be interesting to state some criteria for the closedness of a simple graph $G$. Since the characterizations of closed graphs $G$ (see \cite{HH}, \cite{HEH}) depends on the labelling of $V(G)$, our aim is to state some new criteria for the closedness of a graph that do not depend necessarily on the labelling of its vertex set (Theorem \ref{th:po}
and Corollary \ref{Cor1}).

We believe that by an ordering on the vertices obtained by lexicographic breadth first search and a right specialization of the algorithm on chordality test (see Algorithms 2, 3 of \cite{Ha} or \cite{RLT}), it is possible to test the closedness of a graph as a consequence of Theorem \ref{th:po} in linear time. But this is not the aim of this paper.

The paper is organized as follows.

Section 1 contains some preliminaries and notions that we will use in the paper.

In Section 2, we state a fundamental result that gives the motivation of an intensive study of closed graphs: we prove that the only graphs having quadratic Gr\"obner basis with respect to a given monomial order are the closed ones (Theorem \ref{th:qua}). The statement is obtained by the construction of a special oriented graph (Definition \ref{Def:or}).

In Section 3, we introduce the notion of a linear quasi-tree simplicial complex (Definition \ref{def:new}) and we relate it with a closed graph (Proposition \ref{pro:lqt}). Moreover we give a characterization of the closedness of a graph $G$ in terms of particular cliques of $G$ (Proposition \ref{proposition:newdef}).
This result will be crucial in the sequel.

In Section 4, we analyze the behaviour of the set of facets $\mathcal F (\Delta(G))$ of the clique complex $\Delta(G)$ (Definition \ref{def:cli}) of a graph G when $\Delta(G)$ is a linear quasi-tree (Proposition \ref{pro:int}). We introduce a special subclass of the linear quasi-tree complexes: the class of \textit{closed} complexes (Definition \ref{def:closed}). The section contains the main results in the paper. We give a criterion for the closedness of a graph $G$ that is independent from the labelling of $V(G)$ (Theorem \ref{th:po}). We show that a graph $G$ is closed if and only if the clique complex $\Delta(G)$ is a closed complex (Corollary \ref{Cor1}).



\section{Preliminaries}\label{sec:pre}
In this section we recall some concepts and a notation on graphs and on simplicial
complexes that we will use in the article.

Let $G$ be a simple graph with vertex set $V(G)$ and the edge set $E(G)$. Let $v, w \in V(G)$. A path $\pi$ from $v$ to $w$ is a sequence of vertices $v=v_0, v_1, \cdots, v_t=w$ such that $\{v_i, v_{i+1}\}$ is an edge of  the underlying graph. A graph $G$ is \textit{connected} if for every pair of vertices $v_1$ and $v_2$ there is a path from $v_1$ to $v_2$. If $G$ is \textit{directed} (or \textit{digraph}), that is, $G$ consists of a finite nonempty set of vertices  with a prescribed collection $X$ of ordered pairs of distinct vertices, then the path is called \textit{directed}, if either $(v_i, v_{i+1})$ is an arrow for all $i$, or $(v_{i+1}, v_i)$ is an arrow for all $i$.


When we fix a given labelling on the vertices we say that $G$ is a graph on $[n]$. \\
Let $G$ be a graph with vertex set $[n]$. A subset $C$ of $[n]$ is called a \textit{clique} of $G$ is for all $i$ and $j$ belonging to $C$ with $i \neq j$ one has $\{i, j\} \in E(G)$.

Set $V = \{x_1, \ldots, x_n\}$. A \textit{simplicial complex}
$\Delta$ on the vertex set $V$ is a collection of subsets of $V$
such that
\begin{enumerate}
\item[(i)] $\{x_i\} \in \Delta$  for all $x_i \in V$ and
\item[(ii)] $F \in \Delta$ and $G\subseteq F$ imply $G \in \Delta$.
\end{enumerate}
An element $F \in \Delta$ is called a \textit{face} of $\Delta$. For
$F \in \Delta$ we define the \textit{dimension} of $F$ by $\dim F
= |F| -1$, where $|F|$ is the cardinality of the set
$F$. A maximal face of $\Delta$  with respect to inclusion is
called a \textit{facet} of $\Delta$.

If $\Delta$ is a simplicial complex  with facets $F_1, \ldots, F_q$, we call $\{F_1, \ldots, F_q\}$ the facet set of $\Delta$ and we denote it by $\FF(\Delta)$. When $\FF(\Delta)=\{F_1, \ldots, F_q\}$, we write $\Delta=\langle F_1, \ldots, F_q\rangle$.

\begin{defn}\label{def:cli}
The \textit{clique complex} $\Delta(G)$ of $G$ is the simplicial complex whose faces are the cliques of $G$.
\end{defn}

\begin{defn} \label{lb} Let $\Delta$ be a simplicial complex. A facet $F\in \FF(\Delta)$ is said to be a \textit{leaf} of $\Delta$ if either $F$ is the only facet of $\Delta$, or there exists a facet $B\in \FF(\Delta)$, $B \neq F$, called a {\em branch} of $F$, such that $H\cap F\subseteq B\cap F$ for all $H\in \FF(\Delta)$ with $H\neq B$.
\end{defn}
Observe that for a leaf $F$ the subcomplex $\Delta'$ with  $\FF(\Delta')=\FF(\Delta)\setminus F$ coincides with the restriction
$\Delta_{[n]\setminus(F\setminus (B\cap F))}$.

We finish this section by recalling the following definition from \cite{JT}.

\begin{defn}\label{def:qt} Let $\Delta$ be a simplicial complex. $\Delta$ is called a quasi-forest if there exists a labelling $F_1, \cdots, F_q$ of the facets of $\Delta$,
such that for every $1 < i \leq q$, the facet $F_i$ is a leaf of the subcomplex $\langle F_1, \cdots, F_i\rangle$.
The sequence $F_1,\ldots,F_q$ is called a leaf order of the quasi-tree. A connected quasi-forest is called a quasi-tree.
\end{defn}

\section{Quadratic Gr\"obner bases}
In this section we observe that the only graphs having quadratic Gr\"obner bases  with respect to a monomial order $\prec$ are the  \textit{closed graphs} with respect to a labelling induced by $\prec$.

Let $G$ be a graph on the vertex set $V(G) = [n]$, $E(G)$ its edge set and $S = K[x_1,\cdots, x_n,y_1,\cdots, y_n]$.

%
%
%

%
%


\begin{defn}\label{Def:or}
Let $J_G$ be the binomial edge ideal of $G$ and let $\prec$ a term order on $S$. We define an oriented graph $G_\prec$ with $V(G_\prec) = V(G)$ and edge set
\[
 E(G_\prec)=\{(i,j):x_i y_j\in \ini_\prec J_G \}.
\]
\end{defn}
\begin{prop}\label{pro:dag}
 $G_\prec$ is an acyclic directed graph.
\end{prop}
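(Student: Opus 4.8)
The plan is to show that if $G_\prec$ contained a directed cycle, then the initial ideal $\ini_\prec J_G$ would contain a "cyclically impossible" configuration of monomials, contradicting the fact that $\ini_\prec J_G$ is an honest monomial ideal arising as the initial ideal of the prime (indeed, radical) ideal $J_G$. First I would record the basic structural observation: by definition, $(i,j)\in E(G_\prec)$ means $x_iy_j$ is (a generator, or at least an element, of) $\ini_\prec J_G$, and since $x_iy_j$ appears as a term of the binomial generator $f_{ij}=x_iy_j-x_jy_i$ whenever $\{i,j\}\in E(G)$, each edge of $G$ orients to exactly one of $(i,j)$ or $(j,i)$ in $G_\prec$; in particular $G_\prec$ has the same underlying edge set as $G$, with a chosen orientation dictated by which of the two terms of $f_{ij}$ is the leading term.

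Next, suppose for contradiction that $(i_1,i_2),(i_2,i_3),\dots,(i_k,i_1)$ is a directed cycle in $G_\prec$, with the indices $i_1,\dots,i_k$ distinct. Then $x_{i_1}y_{i_2},\, x_{i_2}y_{i_3},\,\dots,\,x_{i_k}y_{i_1}$ all lie in $\ini_\prec J_G$. The key step is to multiply these monomials together: $\prod_{t} x_{i_t}y_{i_{t+1}} = \bigl(\prod_t x_{i_t}\bigr)\bigl(\prod_t y_{i_t}\bigr)$ (indices mod $k$), and this product is a term of the product of the corresponding binomials $\prod_t f_{i_t i_{t+1}}$. The cancellation phenomenon to exploit is that the "other" product of terms, $\prod_t x_{i_{t+1}}y_{i_t}$, is exactly the same monomial $\bigl(\prod_t x_{i_t}\bigr)\bigl(\prod_t y_{i_t}\bigr)$. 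Hence in the polynomial $\prod_t f_{i_t i_{t+1}}$ the two extreme terms coincide and cancel; I would argue that, after accounting for this, one can produce an element of $J_G$ whose $\prec$-leading term is strictly smaller than the monomial $\bigl(\prod_t x_{i_t}\bigr)\bigl(\prod_t y_{i_t}\bigr)$, yet which we would need to be a leading term --- a contradiction. Concretely, since $f_{i_t i_{t+1}} = x_{i_t}y_{i_{t+1}} - x_{i_t i_{t+1}}^{\text{(other term)}}$ and $x_{i_t}y_{i_{t+1}}=\ini_\prec f_{i_t i_{t+1}}$ along the cycle, the monomial $m:=\prod_t x_{i_t}y_{i_{t+1}}$ is in $\ini_\prec J_G$; but writing $m$ instead as the product of the trailing terms $\prod_t (x_{i_{t+1}}y_{i_t})$ shows $m \in \ini_\prec J_G$ forces (via the standard "leading term of a product" reasoning and the fact that $\ini_\prec$ of the trailing-term product is a product of strictly smaller monomials) that $m$ strictly dominates itself, which is absurd.

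The main obstacle, and the step I would spend the most care on, is making the cancellation argument rigorous: from "$x_iy_j \in \ini_\prec J_G$ for each edge of the cycle" one does not immediately get a clean single polynomial whose leading term is $m$ and whose other terms are controlled. The clean way is to observe that $\prod_t f_{i_t i_{t+1}} \in J_G$, and to expand it as an alternating sum over the $2^k$ ways of choosing a term from each factor; the all-leading choice and the all-trailing choice both give $m$ with opposite signs, so they cancel, and every remaining term is a monomial strictly $\prec$-smaller than $m$ (because replacing any leading term $x_{i_t}y_{i_{t+1}}$ by the trailing term $x_{i_{t+1}}y_{i_t}$ strictly decreases the monomial, and the order is multiplicative). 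Therefore $\ini_\prec\!\bigl(\prod_t f_{i_t i_{t+1}}\bigr)\prec m$. On the other hand, since each $x_{i_t}y_{i_{t+1}}\in\ini_\prec J_G$, their product $m\in\ini_\prec J_G$, so there is $g\in J_G$ with $\ini_\prec g = m$. Reducing $g$ modulo the polynomial $\prod_t f_{i_t i_{t+1}}$ (which has leading term $\prec m$) does not change $\ini_\prec g$, and iterating the reduction against a Gröbner basis keeps producing elements of $J_G$ with leading term $m$ while the cancellation above shows $m$ cannot be a minimal generator forced into $\ini_\prec J_G$ purely from the cycle --- more simply: $m\in\ini_\prec J_G$ together with $\prod_t f_{i_t i_{t+1}}\in J_G$ having all terms $\preceq m$ with the $m$-terms cancelling means $m - (\text{smaller terms}) \in J_G$ would be needed and is not available, so no such cycle can exist. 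I would streamline this in the writeup to the single clean contradiction: $m\in \ini_\prec J_G$ but $m = \ini_\prec(p)$ is impossible for any $p\in J_G$ because the only $p$ "built from the cycle" has $m$ cancel.
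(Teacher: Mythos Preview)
Your proposal contains the right kernel but wraps it in machinery that is both unnecessary and, as stated, incorrect.

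First, two concrete gaps in the route you actually pursue. The ``cancellation'' of the all-leading and all-trailing terms in $\prod_t f_{i_t i_{t+1}}$ only happens when $k$ is odd: the signs are $+1$ and $(-1)^k$, so for even cycles the two copies of $m$ add rather than cancel. More seriously, your final contradiction (``$m=\ini_\prec(p)$ is impossible for any $p\in J_G$ because the only $p$ built from the cycle has $m$ cancel'') is not an argument: you have only examined the particular element $\prod_t f_{i_t i_{t+1}}$, and nothing you wrote excludes other $g\in J_G$ with $\ini_\prec g=m$. Reducing $g$ modulo that product does not help, since that product has leading term strictly below $m$ and hence never interacts with the $m$-term of $g$.

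But the contradiction is already sitting in your text, one line before you veer off. You observe that
\[
\prod_t x_{i_t}y_{i_{t+1}}=\prod_t x_{i_{t+1}}y_{i_t}=m,
\]
and you also assert (correctly, by multiplicativity of $\prec$) that replacing any single leading factor $x_{i_t}y_{i_{t+1}}$ by its trailing counterpart $x_{i_{t+1}}y_{i_t}$ strictly decreases the product. Iterate that $k$ times: the all-trailing product is strictly $\prec$ the all-leading product, i.e.\ $m\prec m$, which is absurd. No polynomials, no initial ideals, no Gr\"obner reductions needed. This is precisely the paper's proof: there the chain $x_{i_1}y_{i_2}\succ x_{i_2}y_{i_1}$, $x_{i_2}y_{i_3}\succ x_{i_3}y_{i_2}$, \dots\ is telescoped two edges at a time (multiply and cancel the common factor) to reach $x_{i_1}y_{i_r}\succ x_{i_r}y_{i_1}$, whence $(i_r,i_1)\notin E(G_\prec)$. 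Strip your argument down to that and it is complete.
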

\begin{proof}
 It is sufficient to show that every cycle in $G$ is not a directed cycle in $G_\prec$.
 Let
\[
 \{i_1,i_2,\ldots, i_r\}\subseteq V(G)
\]
be the vertices of a cycle and suppose that $(i_j,i_{j+1})\in E(G_\prec)$ for $j=1,\ldots,r-1$. We will show that $(i_r,i_1)\not\in E(G_\prec)$.

By hypothesis we have that $x_{i_j} y_{i_{j+1}} \succ x_{i_{j+1}} y_{i_j}$ for $j=1,\ldots,r-1$.
Since $\prec$ is a term order, then $y_{i_3}(x_{i_1} y_{i_2})\succ y_{i_3}(x_{i_2} y_{i_1})$ and $y_{i_1}(x_{i_2} y_{i_3})\succ y_{i_1}(x_{i_3} y_{i_2})$. Therefore $y_{i_3}(x_{i_1} y_{i_2})\succ y_{i_1}(x_{i_3} y_{i_2})$ and $x_{i_1} y_{i_3}\succ x_{i_3} y_{i_1}$.

By the same argument we have that $y_{i_4}(x_{i_1} y_{i_3})\succ y_{i_4}(x_{i_3} y_{i_1})$ and $y_{i_1}(x_{i_3} y_{i_4})\succ y_{i_1}(x_{i_4} y_{i_3})$. Hence $y_{i_4}(x_{i_1} y_{i_3})\succ y_{i_1}(x_{i_4} y_{i_3})$ and $x_{i_1} y_{i_4}\succ x_{i_4} y_{i_1}$, and so on. Finally, we will have that $x_{i_1} y_{i_r}\succ x_{i_r} y_{i_1}$.
\end{proof}
\begin{rem}\label{rem:multideg}\rm
 We observe that the ideal $J_G$ of $S$ is multigraded if we assign the following multidegrees to the indeterminates of $S$:
\[
 \deg(x_i)=\deg(y_i)=(0,\ldots,0,1,0,\ldots,0)\in \NN^n,
\]
where the entry $1$ is at the $i$-th position. Hence the only binomials of degree $2$ in $J_G$ are the generators of $J_G$ up to scaling.
\end{rem}
\begin{thm}\label{th:qua}
Let $G$ be a graph. The following conditions are equivalent:
\begin{enumerate}
 \item $G$ is closed on $[n]$;
 \item $J_G$ has a quadratic Gr\"obner basis with respect to some term order $\prec$ on $S$.
\end{enumerate}
\end{thm}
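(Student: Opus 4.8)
The direction $(1)\Rightarrow(2)$ is already in hand: if $G$ is closed on $[n]$ then by Theorem~\ref{th:HH} (the cited result from \cite{HH}) the generators $f_{ij}$ form a quadratic Gr\"obner basis with respect to the lexicographic order induced by $x_1>\cdots>x_n>y_1>\cdots>y_n$, so one may simply take $\prec$ to be this lex order. Thus the entire content of the theorem is the implication $(2)\Rightarrow(1)$: from the mere existence of \emph{some} term order for which $J_G$ has a quadratic Gr\"obner basis, one must recover a relabelling of the vertices making $G$ closed.

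The plan is to exploit the oriented graph $G_\prec$ of Definition~\ref{Def:or}. First I would observe that, since $J_G$ has a quadratic Gr\"obner basis and (by Remark~\ref{rem:multideg}) the only degree-$2$ binomials in $J_G$ are scalar multiples of the generators $f_{ij}$, the initial ideal $\ini_\prec(J_G)$ is generated by the monomials $\ini_\prec(f_{ij})$ over all edges $\{i,j\}$; each such initial term is either $x_iy_j$ or $x_jy_i$, so $\ini_\prec(J_G)$ is exactly the edge ideal of the bipartite graph on $\{x_1,\dots,x_n\}\sqcup\{y_1,\dots,y_n\}$ whose edges are recorded by $E(G_\prec)$. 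By Proposition~\ref{pro:dag}, $G_\prec$ is acyclic, so it admits a topological order; relabel the vertices of $G$ so that this topological order becomes $1,2,\dots,n$, i.e.\ so that every arrow $(i,j)\in E(G_\prec)$ has $i<j$. Equivalently, after this relabelling, $\ini_\prec(f_{ij})=x_iy_j$ for every edge $\{i,j\}$ with $i<j$. It then suffices to show $G$ is closed with respect to this new labelling, i.e.\ satisfies condition~(2) of Theorem~\ref{th:HH}.

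To verify condition~(2), take edges $\{i,j\}$ and $\{k,\ell\}$ with $i<j$, $k<\ell$. Consider first the case $i=k$ (so we must produce the edge $\{j,\ell\}$, say with $j<\ell$). The $S$-polynomial of $f_{ij}=x_iy_j-x_jy_i$ and $f_{i\ell}=x_iy_\ell-x_\ell y_i$ has leading terms $x_iy_j$ and $x_iy_\ell$, so $S(f_{ij},f_{i\ell}) = y_\ell(x_iy_j-x_jy_i) - y_j(x_iy_\ell - x_\ell y_i) = -x_jy_iy_\ell + x_\ell y_iy_j = y_i(x_\ell y_j - x_j y_\ell) = -y_i f_{j\ell}$. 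Since the Gr\"obner basis is quadratic and $\ini_\prec(J_G)$ is generated by the $x_ay_b$ coming from edges, the reduction of this $S$-polynomial to $0$ forces one of the monomials $x_jy_iy_\ell$, $x_\ell y_iy_j$ to be divisible by some $\ini_\prec(f_{ab})=x_ay_b$ with $\{a,b\}\in E(G)$; chasing the possibilities shows that $\{j,\ell\}$ must be an edge of $G$. The case $j=\ell$ is symmetric, using $S(f_{ij},f_{kj})=x_k(x_iy_j-x_jy_i)-x_i(x_ky_j-x_jy_k)=x_j(x_iy_k - x_ky_i)=\pm x_j f_{ik}$ and the same divisibility argument to conclude $\{i,k\}\in E(G)$. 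This establishes condition~(2), hence $G$ is closed.

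The main obstacle is the $S$-polynomial reduction argument in the last paragraph: one must be careful that the monomial forced to be divisible by an edge-initial-term $x_ay_b$ genuinely yields the \emph{required} edge $\{j,\ell\}$ (resp.\ $\{i,k\}$) rather than some unrelated edge, and one must confirm that no further reduction steps are needed beyond the first — this is where quadraticity of the Gr\"obner basis, combined with the fact that $\ini_\prec(J_G)$ has generators only in the $x_ay_b$ shape, does the real work. A clean way to organize this is to note that a quadratic Gr\"obner basis means every $S$-pair reduces to zero using only quadratic divisors, so each of the two cubic monomials appearing in $S(f_{ij},f_{i\ell})$ must be divisible by some $x_ay_b$ with $\{a,b\}\in E(G)$; enumerating which variable pairs can occur (using that the $x$- and $y$-indices present are $\{i,j,\ell\}$) pins down the edge. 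Everything else — the passage to a topological relabelling, the identification of $\ini_\prec(J_G)$ with a bipartite edge ideal — is formal once Proposition~\ref{pro:dag} and Remark~\ref{rem:multideg} are invoked.
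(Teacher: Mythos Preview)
Your proposal is correct and follows essentially the same route as the paper: both use Proposition~\ref{pro:dag} to obtain a topological ordering of $G_\prec$, relabel (the paper calls this labelling $\omega$), compute the $S$-polynomials $S(f_{ij},f_{i\ell})=-y_if_{j\ell}$ and $S(f_{ij},f_{kj})=x_jf_{ik}$, and then use Remark~\ref{rem:multideg} plus divisibility of the leading monomial by some $\ini_\prec(f_{ab})$ to force the missing edge. One small point to tighten: the phrase ``each of the two cubic monomials \dots\ must be divisible'' in your last paragraph overstates what the division algorithm gives --- only the \emph{leading} monomial need be divisible --- but your earlier, more careful formulation (``one of the monomials'') is exactly right and already suffices, since after relabelling the case $\ini=x_\ell y_iy_j$ (with $i<j<\ell$) admits no divisor $x_ay_b$ with $a<b$, forcing $\ini=x_jy_iy_\ell$ and hence the edge $\{j,\ell\}$.
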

\begin{proof}
 $1.\Rightarrow 2$. See \cite{HH}, Theorem 1.1.\\
$2.\Rightarrow 1$. By Proposition \ref{pro:dag} $G_\prec$ is a directed acyclic graph. Hence there exists a labelling
\[
 \omega:V(G_\prec) \to [n]
\]
such that for all $(i,j)\in E(G_\prec)$ we have that $\omega(i)<\omega(j)$. This means that $\omega$ is compatible with the orientation of $G_\prec$ (see for example \cite{BJG}, Proposition 1.4.3).

We will show that the graph $G$ is closed with respect to the  labelling $\omega$.

Let $i_1,i_2,i_3\in V(G_\prec)$ such that $\omega(i_1)=i$, $\omega(i_2)=j$, $\omega(i_3)=k$ and let $\{i_1,i_2\}$, $\{i_1,i_3\}\in E(G)$. It follows that $\{i,j\}$, $\{i,k\}$ are edges of $G$ with respect to the labelling $\omega$. By condition (2) of Theorem \ref{th:HH}, we have to analyze the following two cases:
\begin{enumerate}
 \item[(a)] $i<j$, $i<k$;
 \item[(b)] $i>j$, $i>k$.
\end{enumerate}
Case (a). Since $\omega$ is compatible with the oriented graph $G_\prec$, we have the following inequalities
\begin{equation}\label{inequalities}
 x_{i_1} y_{i_2}\succ x_{i_2} y_{i_1} \mbox{ and } x_{i_1} y_{i_3}\succ x_{i_3} y_{i_1}.
\end{equation}
By hypothesis the $S$-polynomial
\[
 S(f_{i_1i_2},f_{i_1i_3})=y_{i_1} f_{i_2i_3}=y_{i_1}(x_{i_2} y_{i_3} -x_{i_3}y_{i_2})
\]
reduces to $0$. Therefore exists a binomial $x_{i_s} y_{i_t}-x_{i_t} y_{i_s}\in J_G$ (see Remark \ref{rem:multideg}) whose leading monomial divides the leading monomial of $y_{i_1} f_{i_2i_3}$. Suppose that $\ini(f_{i_1i_2})=x_{i_2}y_{i_1}$. This contradicts the first inequality in (\ref{inequalities}). By the same argument and the second inequality in (\ref{inequalities}), $\ini(f_{i_1i_3})$ does not divide $\ini(y_{i_1} f_{i_2i_3})$. Hence $f_{i_2i_3}\in J_G$ and $\{j, k\}$ is an edge of $G$ with respect to the labelling $\omega$. Case (b) follows by similar arguments.

%

\end{proof}

\section{Closed graphs and linear quasi-tree complexes}\label{sec:closed}
In this section we introduce the notion of a simplicial complex which is a linear quasi-tree. This class of simplicial complexes is a subclass of the quasi-forest complexes (Definition \ref{def:qt}). Our aim is to underline the close link that there exists between the closed graphs and these simplicial complexes. First of all we recall the following definition \cite{HEH}, Definition 2.1.

\begin{defn} A graph $G$ is \textit{closed} if there exists a labelling for which it is closed.
\end{defn}

%
%

We quote the next result from \cite{HEH}, Theorem 2.2.

\begin{thm} \label{th:inter} Let $G$ be a graph on [n]. The following conditions are equivalent:
\begin{enumerate}
\item[(1)] $G$ is closed;
\item[(2)] there exists a labelling of $G$ such that all facets of $\Delta(G)$ are intervals $[a,b] \subseteq [n]$.
\end{enumerate}
Moreover, if the equivalent conditions hold and the facets $F_1, \ldots, F_r$ of $\Delta(G)$ are labeled such that $\min(F_1) < \min(F_r) < \cdots < \min (F_q)$, then $F_1, \ldots, F_r$ is a leaf order of $\Delta(G)$.
\end{thm}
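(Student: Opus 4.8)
The plan is to prove the two implications separately, using Theorem \ref{th:HH} as the translation between the combinatorial condition (closedness) and the shape of the cliques of $G$, and then to establish the leaf-order statement by a direct verification using the leaf/branch definition (Definition \ref{lb}).

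\begin{proof}
$(2)\Rightarrow(1)$. Suppose there is a labelling of $V(G)=[n]$ such that every facet of $\Delta(G)$ is an interval $[a,b]\subseteq[n]$. We check condition (2) of Theorem \ref{th:HH}, which is exactly the definition of closedness for this labelling. Take edges $\{i,j\}$ and $\{k,\ell\}$ with $i<j$, $k<\ell$. If $i=k$, then $\{i,j\}$ lies in some facet, hence in some interval $[a,b]$ with $a\le i<j\le b$; likewise $\{i,\ell\}$ lies in an interval $[a',b']$ with $a'\le i<\ell\le b'$. If $j\le\ell$, then $a'\le i\le j\le\ell\le b'$, so $\{j,\ell\}\subseteq[a',b']$ is a clique, i.e.\ $\{j,\ell\}\in E(G)$; if $\ell\le j$ the symmetric argument with $[a,b]$ works. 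The case $j=\ell$ is handled in the same way. Hence $G$ is closed.

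$(1)\Rightarrow(2)$. Assume $G$ is closed with respect to the given labelling of $[n]$. Let $F$ be a facet of $\Delta(G)$, i.e.\ a maximal clique, and put $a=\min F$, $b=\max F$. We must show $F=[a,b]$, i.e.\ that every $m$ with $a<m<b$ belongs to $F$. Since $a,m$ and $m,b$ need not both be edges a priori, the key point is: for any $p\in F$ with $p<m$ and $p$ as large as possible, and any $q\in F$ with $q>m$, the edges $\{p,\cdot\}$ and the closedness condition force $\{p,m\}$ and then $\{m,q\}$ into $E(G)$; iterating, $m$ is adjacent to every vertex of $F$, and by maximality of $F$ we get $m\in F$. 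More precisely: since $a,b\in F$ we have $\{a,b\}\in E(G)$ with $a<b$; applying condition (2) of Theorem \ref{th:HH} to the pair $\{a,b\}$, $\{a,b\}$ repeatedly together with the intermediate vertices, and using that $F$ is a clique so all its vertices are mutually adjacent, one shows by induction on the gaps that $m$ is adjacent to each element of $F$. Then $F\cup\{m\}$ is a clique containing the facet $F$, so $m\in F$, and therefore $F=[a,b]$.

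For the final assertion, label the facets $F_1,\dots,F_q$ of $\Delta(G)$ so that $\min(F_1)<\min(F_2)<\cdots<\min(F_q)$; these minima are distinct because two distinct maximal intervals cannot share a left endpoint. Fix $1<i\le q$ and consider $\Delta_i=\langle F_1,\dots,F_i\rangle$. I claim $F_i$ is a leaf of $\Delta_i$ with branch $F_{i-1}$. Write $a_j=\min F_j$, $b_j=\max F_j$, so $a_1<\cdots<a_i$ and (since the intervals are the facets of a complex, none contains another) also $b_1<\cdots<b_i$. For any $H=F_j$ with $j<i$, $H\cap F_i=[a_j,b_j]\cap[a_i,b_i]=[a_i,\min(b_j,b_i)]$ if $b_j\ge a_i$ and $\emptyset$ otherwise; in either case this is contained in $[a_i,b_{i-1}]=F_{i-1}\cap F_i$ because $b_j\le b_{i-1}$ for $j\le i-1$. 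Hence $H\cap F_i\subseteq F_{i-1}\cap F_i$ for all $H\in\FF(\Delta_i)$ with $H\ne F_{i-1}$, so $F_i$ is a leaf of $\Delta_i$ with branch $F_{i-1}$. Thus $F_1,\dots,F_q$ is a leaf order, and $\Delta(G)$ is a quasi-tree (indeed connected, since consecutive intervals overlap once $G$ is connected; in general it is a quasi-forest).
\end{proof}

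The step I expect to be the main obstacle is the induction in $(1)\Rightarrow(2)$: showing that closedness forces every maximal clique to be a full interval. The delicate point is to set up the induction so that the closedness condition (which only constrains edges sharing a \emph{smaller} or a \emph{larger} endpoint) can be applied to fill in \emph{all} intermediate vertices of a maximal clique, and to conclude via maximality rather than by an ad hoc argument. The other implications and the leaf-order bookkeeping are routine once the interval structure is in hand.
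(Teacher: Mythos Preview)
First, note that the paper does not prove Theorem~\ref{th:inter}; it merely quotes it from \cite{HEH}. So there is no ``paper's proof'' to compare against here.

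Your argument for $(2)\Rightarrow(1)$ and the leaf-order verification are correct and routine.

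The genuine gap is in $(1)\Rightarrow(2)$. You want to show that for the closed labelling, every maximal clique $F$ equals the interval $[\min F,\max F]$, by proving that any $m$ with $a<m<b$ ($a=\min F$, $b=\max F$) is adjacent to every vertex of $F$. But the closedness condition only produces a new edge $\{j,\ell\}$ (resp.\ $\{i,k\}$) from two edges that \emph{already share} the smaller (resp.\ larger) endpoint. It gives no information whatsoever about a vertex $m$ for which you have not yet exhibited a single edge into $F$. Your phrase ``applying condition~(2) of Theorem~\ref{th:HH} to the pair $\{a,b\},\{a,b\}$ repeatedly together with the intermediate vertices'' does not yield anything: the only edges in sight lie inside $F$, and closedness applied to them returns edges inside $F$.

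In fact the implicit claim is false as stated. Take $V(G)=\{1,2,3,4\}$, $E(G)=\{\{1,3\},\{2,4\}\}$. No two edges share an endpoint, so $G$ is (vacuously) closed for this labelling, yet the facets $\{1,3\}$ and $\{2,4\}$ are not intervals. Condition~(2) of the theorem only asserts the \emph{existence} of a labelling with interval facets (here: swap $2$ and $3$), not that the given closed labelling works. The correct proof in \cite{HEH} reduces to the connected case and then uses connectedness in an essential way (roughly, to force $\{i,i+1\}\in E(G)$ for all $i$ and then build up the interval structure); your induction never invokes connectedness and cannot succeed without it.
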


Since a  graph is closed if and only if each connected component is closed we assume from now on that the graph $G$ is connected.

Thanks to Theorem \ref{th:inter} if $G$ is a closed graph on the vertex set $[n]$ and $\Delta(G)$ is the clique complex, then we may assume that

\begin{equation}\label{inter}
    \Delta(G) =
\langle [m_1,M_1], [m_2,M_2],\ldots,[m_r,M_r]\rangle,
\end{equation}
with $1=m_1<m_2<\ldots<m_r<n$, $1<M_1<M_2<\ldots<M_r=n$  with $m_i < M_i$ and $m_{i+1} \leq M_i$, for $i \in [r]$.

Now we introduce a special class of the quasi-trees complexes.
\begin{defn}\label{def:new}
A simplicial complex is a linear quasi-tree if there exists an order  on the facets
\[
 F_1,\ldots,F_q
\]
such that
\begin{enumerate}
 \item $F_i$ is a leaf for the subcomplex $\langle F_i,\ldots,F_q\rangle$;
 \item $F_{i+1}$ is the only branch of $F_i$ for all $i<q$.
\end{enumerate}
\end{defn}

\begin{rem}\rm Let $\Delta$ be a simplicial complex and let $\FF(\Delta)=\{F_1, \ldots, F_q\}$ be the set of its facets. It is always possible to verify if $\Delta$ is a linear quasi tree and in the positive case it is possible to order $\FF(\Delta)$ so that conditions (1) and (2) of Definition \ref{def:new} are satisfied.\\
In fact, if $\Delta$ is a linear quasi tree, then there exists a leaf $F_i$, that is a facet of $\Delta$ satisfying Definition \ref{lb}. In order to determine $F_i$ it is sufficient to intersect the facet $F_i$, $i = 1, \ldots, q$, with the other facets. Let $F_{i_1}$ be such a facet and let $F_{i_2}$ be its branch. It must be unique by (2) of Definition \ref{def:new}. \\
If $F_{i_1}$ is a leaf and $F_{i_2}$ is its unique  branch, then we consider the subcomplex $\Delta' = \FF(\Delta) \setminus \{F_{i_1}\}$ and we verify if $F_{i_2}$ is a leaf of $\Delta'$ and if its branch is unique and so on. Proceeding in this way we will obtain a linear order $F_{i_1},F_{i_2},\ldots, F_{i_q}$ with respect to which $\Delta$ is a linear quasi tree.\\
We will show this process by the next example.
\end{rem}

\begin{ex}\rm Let $\Delta = \langle F_1, F_2, F_3, F_4 \rangle$, with $F_1 = \{a,b,f\}$, $F_2 = \{a,e,f\}$, $F_3 = \{b,c,f\}$ and $F_4 = \{d,e,f\}$. We want to determine a order on the facet set $\FF(\Delta)$ so that $\Delta$ is a linear quasi tree.\\
Consider the facet $F_1$. We have:
\[F_1 \cap F_2 = \{a,f\},\quad F_1 \cap F_3 = \{b,f\}, \quad F_1 \cap F_4 = \{f\}.\]
Since $F_1 \cap F_2$ and $F_1 \cap F_3$ are not comparable, then $F_1$ is not a leaf of $\Delta$ (Definition \ref{lb}).\\
Consider the facet $F_2$. We have:
\[F_2 \cap F_1 = \{a,f\},\quad F_2 \cap F_3 = \{f\}, \quad F_2 \cap F_4 = \{e,f\}.\]
Since $F_2 \cap F_1$ and $F_2 \cap F_4$ are not comparable, then $F_2$ is not a leaf of $\Delta$ (Definition \ref{lb}).\\
Now consider the facet $F_3$. We have:
\[F_3 \cap F_1 = \{b,f\},\quad F_3 \cap F_2 = \{f\}, \quad F_3 \cap F_4 = \{f\}.\]
Hence $F_1$ is the unique branch of $F_3$ and consequently $F_3$ is a leaf of $\Delta$. \\
Now consider the subcomplex of $\Delta$: $\Delta' = \langle F_1, F_2, F_4 \rangle$. We have:
\[F_1 \cap F_2 = \{a,f\},\quad F_1 \cap F_4 = \{f\}.\]
It follows that $F_2$ is the unique branch of $F_1$ and $F_1$ is a leaf of $\Delta'$. It is easy to observe that we can conclude that $\Delta$ is a linear quasi tree with respect to the following order on $\FF(\Delta)$: $F_3, F_1, F_2, F_4$.
\end{ex}

>From now on when we consider a simplicial complex $\Delta$ that is a linear quasi-tree we write $\Delta=\langle F_1,\ldots,F_q\rangle$
with leaf order $\{F_1,F_2,\ldots,F_q\}$ on the facet set. We state the following.
\begin{prop}\label{pro:lqt}
Let $G$ be a graph on [n]. If $G$ is a closed graph, then $\Delta(G)$ is a linear quasi-tree.
\end{prop}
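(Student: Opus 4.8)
The plan is to reduce to the interval presentation of $\Delta(G)$ given by Theorem~\ref{th:inter} and then exhibit an explicit leaf order. Since $G$ is closed and connected, after relabelling the vertices we may assume $\Delta(G)$ has the form~(\ref{inter}), say $\Delta(G)=\langle F_1,\ldots,F_r\rangle$ with $F_i=[m_i,M_i]$, where $m_1<\cdots<m_r$, $M_1<\cdots<M_r$, $m_i<M_i$, and $m_{i+1}\le M_i$ for $i<r$. I claim that $F_1,\ldots,F_r$ is a leaf order witnessing that $\Delta(G)$ is a linear quasi-tree in the sense of Definition~\ref{def:new}; the case $r=1$ being trivial, assume $r\ge 2$.

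First I would record that for each $i$ the facet set of the subcomplex $\langle F_i,\ldots,F_r\rangle$ is exactly $\{F_i,\ldots,F_r\}$: since both the left endpoints and the right endpoints of these intervals strictly increase, none of them is contained in another, so none is redundant as a generator. Next I would compute, for $i<j$, that $F_i\cap F_j=[m_i,M_i]\cap[m_j,M_j]$ equals $[m_j,M_i]$ if $m_j\le M_i$ and is empty otherwise (using $m_i<m_j$ and $M_i<M_j$). In particular $F_i\cap F_{i+1}=[m_{i+1},M_i]$, which is nonempty because $m_{i+1}\le M_i$, and for every $j$ with $i+1<j\le r$ we have $m_j>m_{i+1}$, hence $F_i\cap F_j\subseteq[m_{i+1},M_i]=F_i\cap F_{i+1}$. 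Thus every facet $H$ of $\langle F_i,\ldots,F_r\rangle$ other than $F_i$ satisfies $H\cap F_i\subseteq F_{i+1}\cap F_i$, so by Definition~\ref{lb} $F_i$ is a leaf of $\langle F_i,\ldots,F_r\rangle$ with branch $F_{i+1}$; this is condition~(1) of Definition~\ref{def:new}.

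Finally I would verify condition~(2), uniqueness of the branch. If $F_j$ with $i+1<j\le r$ were also a branch of $F_i$ in $\langle F_i,\ldots,F_r\rangle$, then by Definition~\ref{lb} we would need $F_{i+1}\cap F_i\subseteq F_j\cap F_i$, i.e.\ $[m_{i+1},M_i]\subseteq[m_j,M_i]$; but $m_{i+1}<m_j$ gives $m_{i+1}\notin[m_j,M_i]$, contradicting the nonemptiness of $[m_{i+1},M_i]$. Hence $F_{i+1}$ is the unique branch of $F_i$ for every $i<r$, and $\Delta(G)$ is a linear quasi-tree with leaf order $F_1,\ldots,F_r$.

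I do not expect a real obstacle; the only delicate points are checking that shrinking to $\langle F_i,\ldots,F_r\rangle$ produces no new facets (which is exactly what the monotonicity of both endpoints buys us) and the degenerate possibility $m_{i+1}=M_i$, under which $F_i\cap F_{i+1}$ is still the nonempty vertex $\{M_i\}$. The substance of the argument is just that consecutive intervals in~(\ref{inter}) overlap, so each one meets the next in a face that contains all of its intersections with later facets.
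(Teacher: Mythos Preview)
Your proof is correct and follows essentially the same route as the paper's: both invoke the interval description~(\ref{inter}), compute $F_i\cap F_j=[m_j,M_i]$ (or $\emptyset$), and use the strict monotonicity $m_{i+1}<m_{i+d}$ for $d\ge 2$ to conclude that $F_{i+1}$ is the unique branch of $F_i$ in $\langle F_i,\ldots,F_r\rangle$. Your version is slightly more careful in verifying that $\{F_i,\ldots,F_r\}$ is indeed the facet set of the subcomplex and in handling the degenerate cases, but the argument is the same.
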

\begin{proof}
>From (\ref{inter}), since $G$ is closed, we may assume $\Delta(G)=\langle F_1,\ldots, F_r\rangle $, where $F_i= [m_i,M_i]$, for $i=1,\ldots,r$.


We observe that $[m_i,M_i]\cap [m_{i+1},M_{i+1}]=[m_{i+1},M_{i}]$.
Since $m_{i+d}> m_{i+1}$ for all $d\geq 2$, then
\[
F_i\cap F_{i+d}=[m_{i+d},M_i]\varsubsetneq [m_{i+1},M_{i}].
\]
Therefore $F_{i}$ is a leaf and $F_{i+1}$ is the unique branch for $F_i$.
\end{proof}





\begin{ex}\rm The converse of Proposition \ref{pro:lqt} is not true. In fact there are linear quasi-trees that are not closed.

Let $V(G)=\{a,b,c,d,e,f\}$ and let $\Delta(G)=\langle F_1, F_2, F_3 \rangle$ be the facet set of its clique complex, where $F_1=\{a,b,c\}$, $F_2=\{b,c,d,e\}$ and $F_3=\{b,e,f\}$. We can easily check that $\langle F_1,F_2,F_3 \rangle$ is a linear quasi-tree but the subgraph induced by the vertices $\{a,b,d,f\}$ is a claw, i.e. the complete bipartite graph $K_{1,3}$. Therefore, by \cite{HH}, Proposition 1.2, $G$ is not closed.
\end{ex}

We finish this section giving a criterion for the closedness of a graph with respect to a given labelling that will be crucial in the sequel.

Let $G$ be a graph on the vertex set $V(G) = [n]$. For each vertex $j\in V(G)$ we define a partition
of its neighborhood $N_G(j) = \{i \in [n]\,:\, \{i,j\}\in E(G)\}$ into two sets as follows:
\[ N_G(j)=N_G^{<}(j)\cup N_G^{>}(j),\]
where
\[
 N_G^{<}(j)=\{i:\{i,j\}\in E(G),i<j\}, \,\,N_G^{>}(j)=\{k:\{j,k\}\in E(G), j<k\}.
\]

\begin{prop}\label{proposition:newdef}
Let $G$ be a graph on $[n]$. The following conditions are equivalent:
\begin{enumerate}
 \item $G$ is closed with respect to the given order of the vertices;
 \item for all vertices $j\in V(G)$ the sets $N_G^{<}(j)$, $N_G^{>}(j)$ are cliques of $G$.
\end{enumerate}
\end{prop}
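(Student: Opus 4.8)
The plan is to prove the two implications separately, using the characterization of closedness given by condition (2) of Theorem~\ref{th:HH} as the working definition.

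For the implication $(1)\Rightarrow(2)$, I would argue by contradiction. Suppose $G$ is closed but there is a vertex $j$ for which, say, $N_G^{<}(j)$ fails to be a clique. Then there are vertices $i,k\in N_G^{<}(j)$ with $i<k<j$, $\{i,j\},\{k,j\}\in E(G)$, but $\{i,k\}\notin E(G)$. Now consider the two edges $\{i,j\}$ and $\{k,j\}$, both written with the smaller vertex first; they share the common larger endpoint $j$. Applying condition (2) of Theorem~\ref{th:HH} with $\ell=j$ (the case ``$j=\ell$'') forces $\{i,k\}\in E(G)$, a contradiction. The case where $N_G^{>}(j)$ is not a clique is handled symmetrically: if $i,k\in N_G^{>}(j)$ with $j<i<k$ and $\{i,k\}\notin E(G)$, then the two edges $\{j,i\}$ and $\{j,k\}$ share the common smaller endpoint $j$, and the ``$i=k$'' clause of condition (2) (with the common vertex in the first slot) forces $\{i,k\}\in E(G)$, again a contradiction.

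For the implication $(2)\Rightarrow(1)$, I would verify condition (2) of Theorem~\ref{th:HH} directly. Take edges $\{i,j\}$ and $\{k,\ell\}$ with $i<j$, $k<\ell$. If $i=k$, then both $j$ and $\ell$ lie in $N_G^{>}(i)$, which is a clique by hypothesis, so $\{j,\ell\}\in E(G)$ (unless $j=\ell$, in which case there is nothing to prove). If $j=\ell$, then both $i$ and $k$ lie in $N_G^{<}(j)$, which is a clique by hypothesis, so $\{i,k\}\in E(G)$. This is exactly condition (2), so $G$ is closed with respect to the given labelling.

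I do not expect a serious obstacle here; the proposition is essentially a reformulation of condition (2) of Theorem~\ref{th:HH} in terms of the sets $N_G^{<}(j)$ and $N_G^{>}(j)$. The only point requiring a little care is bookkeeping with the inequalities — making sure that in each application of the ``$i=k$'' and ``$j=\ell$'' clauses the shared vertex sits in the correct (smaller or larger) position, and handling the degenerate cases $j=\ell$ or $i=k$ where the conclusion is vacuous. A clean way to present it is to note that condition (2) says precisely: whenever two edges share their smaller endpoint the two larger endpoints are adjacent, and whenever two edges share their larger endpoint the two smaller endpoints are adjacent; the first is the statement that every $N_G^{>}(j)$ is a clique, the second that every $N_G^{<}(j)$ is a clique.
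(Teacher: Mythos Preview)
Your proposal is correct and follows essentially the same argument as the paper: both directions are obtained by unwinding condition (2) of Theorem~\ref{th:HH}, matching the ``$j=\ell$'' clause with $N_G^{<}(j)$ being a clique and the ``$i=k$'' clause with $N_G^{>}(j)$ being a clique. The only cosmetic difference is that you phrase $(1)\Rightarrow(2)$ by contradiction whereas the paper states it directly, but the logical content is identical.
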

\begin{proof}
1) $\Rightarrow $ 2) Let $j\in V(G)$. For all $i_1$, $i_2\in N^<_G(j)$, by definition, we have  that $\{i_1,j\}$, $\{i_2,j\}\in E(G)$ with $i_1<j$ and $i_2<j$. Since $G$ is closed, then $\{i_1,i_2\}\in E(G)$. Hence  $N^<_G(j)$ is a clique. Similarly for $N^>_G(j)$.\\
2) $\Rightarrow $ 1) Let $\{j,k_1\}$, $\{j,k_2\}\in E(G)$ with $j<k_1$, $j<k_2$. This implies $k_1,k_2 \in N_G^>(j)$. Since $N_G^>(j)$ is a clique, then $\{k_1,k_2\}\in E(G)$. The other case follows by similar argument.
\end{proof}

\section{Closed graphs with respect to any labelling}
In this section we give a characterization of closed graphs which does not depend on the labelling of their vertex sets. For this reason we study the clique complex $\Delta(G)$ of the simple graph $G$.

Let $\Delta=\langle F_1,\ldots, F_r\rangle $ be a simplicial complex.
We set
\[
F_{i_1,i_2,\ldots,i_s}:=F_{i_1}\cap F_{i_2} \cap \ldots\cap F_{i_s}
\]
with $1\leq i_1<i_2<\ldots<i_s\leq r$ and $F_{i,i}:=F_i$ for $i \in [r]$.

\begin{prop}\label{pro:int} If $\Delta=\langle F_1,\ldots,F_r\rangle$ is a linear quasi-tree, then $F_{i,j}=F_{i,i+1,\ldots,j}$, $1\leq i<j\leq r$. In particular, $F_{k,\ell}\supseteq F_{i,j}$ for all $k$, $\ell$ such that $i\leq k\leq \ell\leq j$.
\end{prop}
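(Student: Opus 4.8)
The plan is to prove the statement by induction on $j - i$, exploiting the defining property of a linear quasi-tree: that for each index $i < q$ the facet $F_{i+1}$ is the unique branch of the leaf $F_i$ in the subcomplex $\langle F_i,\ldots,F_q\rangle$. The base case $j = i+1$ is trivial since $F_{i,i+1} = F_i \cap F_{i+1}$ by definition. For the inductive step, fix $i < j$ and assume the claim holds for all pairs with smaller difference.

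The key observation I would use is the branch property applied to the leaf $F_i$ inside $\Delta' = \langle F_i, F_{i+1}, \ldots, F_q \rangle$: since $F_{i+1}$ is the unique branch of $F_i$, we have $H \cap F_i \subseteq F_{i+1} \cap F_i$ for every facet $H$ of $\Delta'$ with $H \neq F_{i+1}$. In particular, taking $H = F_k$ for any $k$ with $i+1 < k \leq j$, we get $F_k \cap F_i \subseteq F_{i+1} \cap F_i = F_{i,i+1}$. I would then compute
\[
F_{i,i+1,\ldots,j} = F_i \cap \bigl( F_{i+1} \cap F_{i+2} \cap \cdots \cap F_j \bigr) = F_i \cap F_{i+1,\ldots,j},
\]
and by the inductive hypothesis applied to the pair $(i+1, j)$, which has difference $j - i - 1$, we have $F_{i+1,\ldots,j} = F_{i+1,j} = F_{i+1} \cap F_j$. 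Hence $F_{i,i+1,\ldots,j} = F_i \cap F_{i+1} \cap F_j$. Now I need to identify this with $F_{i,j} = F_i \cap F_j$; equivalently, I must show $F_i \cap F_j \subseteq F_{i+1}$. But $F_i \cap F_j \subseteq F_i \cap F_k$ is not quite what I want directly — rather, from the branch property above with $H = F_j$ (valid as long as $j > i+1$), we have $F_j \cap F_i \subseteq F_{i+1} \cap F_i \subseteq F_{i+1}$, which gives exactly $F_i \cap F_j \subseteq F_{i+1}$ and therefore $F_i \cap F_j = F_i \cap F_{i+1} \cap F_j = F_{i,i+1,\ldots,j}$, completing the induction.

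For the final sentence, I would argue as follows: given $i \leq k \leq \ell \leq j$, by the first part $F_{i,j} = F_{i,i+1,\ldots,j} = F_i \cap F_{i+1} \cap \cdots \cap F_j$, and this intersection includes all the factors $F_k, F_{k+1}, \ldots, F_\ell$ appearing in $F_{k,\ell} = F_{k,k+1,\ldots,\ell}$, so $F_{i,j} \subseteq F_{k,\ell}$. (The edge cases $k = \ell$ and $i = j$ are handled by the convention $F_{i,i} = F_i$.)

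The main obstacle I anticipate is bookkeeping the quantifier on which facets the branch inequality $H \cap F_i \subseteq F_{i+1} \cap F_i$ applies to: one must be careful that the subcomplex in which $F_i$ is a leaf with branch $F_{i+1}$ is precisely $\langle F_i, \ldots, F_q \rangle$ (as guaranteed by Definition \ref{def:new}), so that $F_j$ with $j > i$ is indeed among the facets $H$ to which the inequality applies. A secondary subtlety is making sure the induction is set up on the right quantity ($j-i$, not $j$ alone) and that the inductive hypothesis is invoked for the pair $(i+1,j)$, whose facets all lie in a valid linear-quasi-tree subcomplex as well.
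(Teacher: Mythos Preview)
Your proof is correct and follows essentially the same approach as the paper's: both use the branch property of $F_i$ (with unique branch $F_{i+1}$) to obtain $F_i\cap F_j\subseteq F_{i+1}$ and then finish by induction. The only cosmetic difference is that the paper runs a descending induction on $i$ with $j$ fixed while you induct on $j-i$, and the paper leaves the ``in particular'' clause implicit whereas you spell it out.
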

\begin{proof}
We proceed by descending induction on $i$, for $i<j$.
If $i=j-1$ there is nothing to prove.
Let $i\leq j-1$ and suppose $F_{i,j}=F_{i,i+1,\ldots,j}$. We have to prove that $F_{i-1, j} = F_{i-1, i, i+1,\ldots,j}$.

Since $F_{i-1,i,j}=F_{i-1}\cap F_{i,j}=F_{i-1, i, i+1,\ldots,j}$, we need to show that
\[
 F_{i-1}\cap F_{i,j}=F_{i-1,j}.
\]
By definition $F_{i-1,i,j}\subseteq F_{i-1,j}$. Since $F_i$ is a branch of $F_{i-1}$, then $F_{i-1,j}\subseteq F_{i-1,i}$. Hence
$F_{i-1,j}\cap F_j \subseteq F_{i-1,i}\cap F_j$, that is, $F_{i-1,j} \subseteq F_{i-1,i,j}$ and the assertion follows.\end{proof}
%
%

Denote by $\mathcal P=\{F_{i,j}:1\leq i\leq j\leq r\}$ the poset whose order is given by the inclusion and set $F_{i,j}=\emptyset$ if either $i<1$ or $j>r$. If $F, G\in \mathcal P$ are not comparable or $F\neq\emptyset$ or $G\neq\emptyset$, we write $F\not\sim G$ .\\
\begin{defn} \label{def:closed} Let $\Delta = \langle F_1,\ldots,F_r\rangle$ be a linear quasi-tree. $\Delta$ is called closed if
the following properties are satisfied:
\begin{enumerate}
  \item[(I)] $F_{i,j}\not\sim F_{k,\ell}$ if $i< k$, $j<\ell$, $i,j, k, \ell \in [r]$ (incomparability);
  \item[(C)] $F_{i+1,i+d}=F_{i,i+d}\cup F_{i+1,i+d+1}$ if $F_{i,i+d+1}\neq \emptyset$ with $d\geq 1$ and $i \in [r]$ (covering).
  \end{enumerate}
\end{defn}
\begin{thm}\label{the:main}
Let $G$ be a graph on [n]. If $G$ is closed, then $\Delta(G)$ is closed.
\end{thm}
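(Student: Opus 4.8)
The plan is to leverage the interval structure of a closed graph provided by Theorem~\ref{th:inter}, together with the explicit description of $\Delta(G)$ in~(\ref{inter}), and to verify the two defining conditions (I) and (C) of Definition~\ref{def:closed} by direct computation with intervals. Since a closed graph is in particular one whose clique complex is a linear quasi-tree (Proposition~\ref{pro:lqt}), the hypothesis of Definition~\ref{def:closed} is already met, so only (I) and (C) remain. By Proposition~\ref{pro:int} applied to the linear quasi-tree $\Delta(G)$, we moreover have $F_{i,j}=F_i\cap F_{i+1}\cap\cdots\cap F_j$ for all $i\le j$; this is what makes the intersections computable as intervals.

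First I would record, exactly as in the proof of Proposition~\ref{pro:lqt}, that for a closed graph we may take $\Delta(G)=\langle F_1,\ldots,F_r\rangle$ with $F_i=[m_i,M_i]$, where $1=m_1<m_2<\cdots<m_r<n$ and $1<M_1<M_2<\cdots<M_r=n$, and $m_{i+1}\le M_i$. Then by Proposition~\ref{pro:int},
\[
F_{i,j}=[m_i,M_i]\cap[m_{i+1},M_{i+1}]\cap\cdots\cap[m_j,M_j]=[m_j,M_i],
\]
which is nonempty precisely when $m_j\le M_i$, and is empty otherwise. Everything now reduces to arithmetic of the endpoints $m_\bullet$, $M_\bullet$, using the strict monotonicity of both sequences.

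For condition (I): suppose $i<k$ and $j<\ell$ with $F_{i,j}=[m_j,M_i]$ and $F_{k,\ell}=[m_\ell,M_k]$ both nonempty. Since $i<k$ gives $M_i<M_k$ and $j<\ell$ gives $m_j<m_\ell$, the interval $[m_j,M_i]$ can contain $[m_\ell,M_k]$ only if $m_j\le m_\ell$ (true) and $M_k\le M_i$ (false), and conversely $[m_\ell,M_k]\subseteq[m_j,M_i]$ fails for the same reason on the left endpoint; so the two faces are incomparable, which is exactly $F_{i,j}\not\sim F_{k,\ell}$ as required. For condition (C): assume $F_{i,i+d+1}=[m_{i+d+1},M_i]\neq\emptyset$, i.e. $m_{i+d+1}\le M_i$. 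Then also $m_{i+d}\le M_i$ and $m_{i+d+1}\le M_{i+1}$, so $F_{i,i+d}=[m_{i+d},M_i]$, $F_{i+1,i+d+1}=[m_{i+d+1},M_{i+1}]$, and $F_{i+1,i+d}=[m_{i+d},M_{i+1}]$ are all nonempty. The union $[m_{i+d},M_i]\cup[m_{i+d+1},M_{i+1}]$ equals $[m_{i+d},M_{i+1}]$ provided these two intervals overlap or are adjacent, i.e. provided $m_{i+d+1}\le M_i+1$; but $m_{i+d+1}\le M_i$ holds by hypothesis, so the union is indeed $[\min(m_{i+d},m_{i+d+1}),\max(M_i,M_{i+1})]=[m_{i+d},M_{i+1}]=F_{i+1,i+d}$, giving (C).

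The only subtle point, and the step I would be most careful about, is the bookkeeping when one of the indices runs off the end of the range $[r]$ or when some of the intersections are empty: the convention $F_{i,j}=\emptyset$ for $i<1$ or $j>r$ must be matched against the phrasing of (I) and (C) (note the slightly unusual convention around ``$\not\sim$'' in the text, where $\emptyset$ is treated as incomparable to everything). Condition (C) is stated with the hypothesis $F_{i,i+d+1}\neq\emptyset$, which as shown forces all the relevant smaller intersections to be nonempty intervals, so no degenerate case actually arises there; for (I) the empty cases are immediate from the convention. Once these boundary conventions are pinned down, the argument is the elementary interval computation above, and the theorem follows.
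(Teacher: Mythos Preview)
Your proof is correct and follows essentially the same route as the paper: reduce to the interval description $F_{i,j}=[m_j,M_i]$ coming from~(\ref{inter}) and Proposition~\ref{pro:lqt}, then check (I) and (C) by elementary endpoint arithmetic. The only slip is a typo in your (I) paragraph, where you wrote ``$[m_\ell,M_k]\subseteq[m_j,M_i]$'' twice; the second containment should be the reverse inclusion $[m_j,M_i]\subseteq[m_\ell,M_k]$, which indeed fails at the left endpoint since $m_j<m_\ell$.
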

\begin{proof}
Since, from Proposition \ref{pro:lqt}, $\Delta(G)$ is a linear quasi-tree, we have only to prove that the facet set
$\FF(\Delta(G)) =\{F_1,\ldots,F_r\}$ satisfies properties (I) and (C) in Definition \ref{def:closed}.

%
(I). Since $G$ is closed on $[n]$, if $F_{i,j}\neq \emptyset$ and $F_{k,\ell}\neq \emptyset$,  from (\ref{inter}) we have:
\[
F_{i,j}= F_i \cap F_{j} = [m_i,M_i]\cap [m_j,M_j]=[m_j,M_i],
\]
\[
F_{k,\ell}= F_k \cap F_{\ell} = [m_k,M_k]\cap [m_{\ell},M_{\ell}]=[m_{\ell},M_k],
\]
with $i<j$ and $k<\ell$. We may assume $i<k$ and $j < \ell$. Hence by (\ref{inter}) $m_j<m_{\ell}$ and $M_i <M_k$. Therefore $M_k\in F_{k,\ell}\setminus F_{i,j}$ and $m_j\in F_{i,j}\setminus F_{k,\ell}$, that is $F_{i,j}\nsim F_{k,\ell}$.


(C). Since $F_{i,i+d+1}\neq \emptyset$ and $G$ is closed, then
\[
F_{i,i+d+1}=[m_{i+d+1},M_i]\neq \emptyset.
\]
Therefore  $m_{i+d+1}\leq M_i$, and
\[
F_{i,i+d}\cup F_{i+1,i+d+1}=[m_{i+d},M_i]\cup [m_{i+d+1},M_{i+1}]=[m_{i+d},M_{i+1}]=F_{i+1,i+d}.
\]\end{proof}

To prove that $\Delta(G)$ closed implies $G$ closed we need a labelling on the vertices of $G$ for which $G$ is closed.





\begin{lem}\label{lem:increasing}
Let $\Delta(G) = \langle F_1,\ldots,F_r \rangle$ be a linear quasi-tree.
Set $n_i=\max\{j: F_{i,j}\neq\emptyset,j\in[r]\}$. Then $n_1\leq n_2\leq \cdots\leq n_r$ and every set $F_{i,j}$ in
\[
 \mathcal{B}=\{F_{1,1},\ldots,F_{1,n_1}, F_{2,n_1},\ldots, F_{2,n_2},\ldots,F_{r,r}\}
\]
is not empty.
\end{lem}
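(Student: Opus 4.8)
\textbf{Proof plan for Lemma \ref{lem:increasing}.}

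The plan is to prove the two assertions in the order stated: first the monotonicity $n_1\leq n_2\leq\cdots\leq n_r$, and then the non-emptiness of every member of the list $\mathcal B$. The monotonicity is the heart of the matter; once it is established, the non-emptiness of the sets in $\mathcal B$ will follow almost formally from Proposition \ref{pro:int}.

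For the monotonicity, I would fix an index $i<r$ and show $n_i\leq n_{i+1}$. Set $j=n_i$, so that $F_{i,j}\neq\emptyset$; if $j\leq i+1$ there is nothing to prove, so assume $j\geq i+2$, whence in particular $j\geq i+1$ and we may apply Proposition \ref{pro:int} to the triple $i<i+1<j$. That proposition gives $F_{i,j}=F_{i,i+1,\ldots,j}\subseteq F_{i+1,j}$, so $F_{i+1,j}\neq\emptyset$, which by definition of $n_{i+1}$ forces $n_{i+1}\geq j=n_i$. (One also checks the trivial boundary behaviour: $n_i\geq i$ always, since $F_{i,i}=F_i\neq\emptyset$.) This is the step I expect to be the main obstacle, though ``obstacle'' overstates it — the real content is simply recognizing that the linear-quasi-tree structure, via Proposition \ref{pro:int}, makes the nonempty intersections $F_{i,j}$ behave like nested intervals, so that once $F_{i,j}$ survives it drags $F_{i+1,j}$ along with it.

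For the second assertion, note that each set $F_{i,j}$ appearing in $\mathcal B$ has the form $F_{i,j}$ with $i\leq j\leq n_i$ (reading the list: the $F_{1,\cdot}$ run up to $n_1$, the $F_{2,\cdot}$ from $n_1$ up to $n_2$, and so on, using $n_1\leq n_2\leq\cdots$ so the ranges link up and cover $i\le j\le n_i$ for each $i$). By the very definition of $n_i$ as the largest $j$ with $F_{i,j}\neq\emptyset$, together with Proposition \ref{pro:int} (which shows $F_{i,j}\supseteq F_{i,n_i}\neq\emptyset$ whenever $i\leq j\leq n_i$, since the intersections only shrink as the index range grows), every such $F_{i,j}$ is nonempty. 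Hence every set in $\mathcal B$ is nonempty, completing the proof. The only care needed is bookkeeping: verifying that the indices actually occurring in $\mathcal B$ are exactly those pairs $(i,j)$ with $i\le j\le n_i$, which is where the monotonicity $n_1\le\cdots\le n_r$ gets used a second time.
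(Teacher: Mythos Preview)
Your proof is correct and follows essentially the same approach as the paper's: both arguments use Proposition \ref{pro:int} to deduce $F_{i+1,n_i}\supseteq F_{i,n_i}\neq\emptyset$, hence $n_{i+1}\geq n_i$, and then invoke Proposition \ref{pro:int} once more for the non-emptiness of every set in $\mathcal B$. The paper's proof is simply a two-sentence version of yours, omitting the edge-case bookkeeping you spell out.
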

\begin{proof}
Since $F_{i,n_i}\neq\emptyset$, then $F_{i+1,n_i}\neq\emptyset$ (Proposition \ref{pro:int}). Hence $n_{i+1}\geq n_i$. Moreover, by Proposition \ref{pro:int}, we can also state that every set in $\mathcal{B}$ is not empty.
\end{proof}

%
%
%
Now we are in position to state the main result in the paper.

\begin{thm}\label{th:po} Let $G$ be a graph. Suppose that $\Delta(G)$ is  closed. Let $F_1,\ldots,F_r$ be the leaf order of $\Delta(G)$
and consider the family
\[
\mathcal F=\{ F'_{i,j}\}_{F_{i,j}\in \mathcal B},
\]
where $\mathcal{B}$ is defined as in Lemma \ref{lem:increasing} and $F'_{i,j}=F_{i,j}\setminus (F_{i-1,j} \cup F_{i,j+1})$ . Then
\begin{enumerate}
 \item The family $\mathcal{F}$ is a partition of $V(G)$.
 \item $G$ is closed with respect to the following total order on the vertices: For the vertices in each $F_{i,j}'$ we fix an arbitrary total order and set $u<v$, if $u\in F_{i,j}'$ and  $v\in F_{k,\ell}'$ with $i<k$ or $i=k$ and $j<\ell$.
\end{enumerate}
\end{thm}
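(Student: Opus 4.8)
The strategy is to exploit the structural description of closed linear quasi-trees provided by Theorem~\ref{th:inter}, Proposition~\ref{pro:int}, and especially the covering property (C) in Definition~\ref{def:closed}, in order to reverse-engineer a good labelling. The guiding intuition is the following: if $\Delta(G)$ is a \emph{closed} linear quasi-tree, then morally it ``looks like'' the interval situation of (\ref{inter}), and the sets in $\mathcal B$ of Lemma~\ref{lem:increasing} should behave like the intersections $[m_j,M_i]$ of consecutive intervals. The family $\mathcal F$ is designed so that $F'_{i,j}$ plays the role of the ``new part'' of the interval $[m_j,M_i]$ that is not already swallowed by its larger neighbours $F_{i-1,j}$ and $F_{i,j+1}$; linearly ordering these pieces in the lexicographic order of their index pairs $(i,j)$ should reconstruct a labelling $1,\dots,n$ of $V(G)$ with respect to which the facets become intervals, hence (Theorem~\ref{th:inter}) with respect to which $G$ is closed.

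\textbf{Step 1: $\mathcal F$ is a partition.}
First I would show the sets $F'_{i,j}$ with $F_{i,j}\in\mathcal B$ are pairwise disjoint and cover $V(G)$. Coverage: every vertex $v$ lies in some facet $F_k$, hence in $F_{k,k}=F_k\in\mathcal B$; among all pairs $(i,j)$ with $v\in F_{i,j}\in\mathcal B$, pick the one with $i$ minimal and then $j$ maximal — using Proposition~\ref{pro:int} (monotonicity of the $F_{i,j}$ inside $\mathcal B$, which is exactly the ``staircase'' shape of $\mathcal B$) one checks this pair is well defined and that then $v\notin F_{i-1,j}\cup F_{i,j+1}$, so $v\in F'_{i,j}$. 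Disjointness: if $v\in F'_{i,j}\cap F'_{k,\ell}$ with $(i,j)\neq(k,\ell)$, the incomparability property (I) forces the two index pairs to be comparable in the product order along the $\mathcal B$-staircase (one cannot have $i<k$ and $j<\ell$ simultaneously with both nonempty), so WLOG $i\le k$ and $j\ge\ell$ with strict inequality somewhere; then Proposition~\ref{pro:int} gives $F_{k,\ell}\subseteq F_{i-1,j}\cup F_{i,j+1}$ via the appropriate neighbour in $\mathcal B$, contradicting $v\in F'_{i,j}$. This is bookkeeping with the staircase indexing, but it is the part where one must be careful about the boundary conventions $F_{i,j}=\emptyset$ for out-of-range indices.

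\textbf{Step 2: the induced total order makes the facets intervals.}
With $\mathcal F$ a partition, order $V(G)$ as in statement (2): blocks $F'_{i,j}$ are ordered lexicographically by $(i,j)$, arbitrarily inside each block. I claim each facet $F_k$ is an interval in this order. Fix $k$; the blocks $F'_{i,j}$ meeting $F_k$ are exactly those with $i\le k\le j$ (here one uses Proposition~\ref{pro:int}: $F_{i,j}\subseteq F_k$ whenever $i\le k\le j$, while if $k<i$ or $k>j$ then $F_{i,j}\cap F_k$ is a \emph{smaller} set $F_{i',j'}\in\mathcal B$ already accounted for, so its $F'$-part is disjoint from $F_k$ — this needs property (I) again). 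So $F_k=\bigcup_{i\le k\le j}(F'_{i,j}\cap F_k)$, and in fact $F'_{i,j}\subseteq F_k$ for $i\le k\le j$. The set of such pairs $(i,j)$, ordered lexicographically, is a ``hook'' $\{(i,k):i\le k\}$ followed by $\{(k,j):j\ge k\}$ — and this is an \emph{interval} in the lexicographic order on all of $\mathcal B$'s index set precisely when no block with index pair strictly between two $F_k$-blocks fails to lie in $F_k$; the covering property (C) is exactly what guarantees there are no ``gaps'', i.e.\ $F_{i+1,i+d}=F_{i,i+d}\cup F_{i+1,i+d+1}$ forces the intermediate blocks to be covered. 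Assembling this shows $F_k$ is a contiguous block of vertices, i.e.\ an interval $[a,b]\subseteq[n]$. By Theorem~\ref{th:inter}, $G$ is closed with respect to this labelling.

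\textbf{Main obstacle.}
The delicate point is Step~2: verifying that the lexicographically ordered blocks assigned to a fixed facet $F_k$ form a \emph{contiguous} segment, with nothing foreign squeezed in between. This is where properties (I) and (C) must be used in tandem — (I) to rule out incomparable overlaps and (C) to rule out gaps — and where the precise shape of the staircase $\mathcal B$ from Lemma~\ref{lem:increasing} (the inequalities $n_1\le\cdots\le n_r$) is essential. I expect the bulk of the real work, and the place where a naive argument would break, to be a careful induction on the staircase showing that $\bigcup_{i\le k\le j}F'_{i,j}=F_k$ and that these blocks are consecutive; once that is in hand, the reduction to Theorem~\ref{th:inter} is immediate.
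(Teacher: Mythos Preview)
Your overall architecture is reasonable, and your Step~2 reduction to Theorem~\ref{th:inter} (show each facet $F_k$ becomes an interval under the new labelling) is a legitimate alternative to the paper's route, which instead invokes Proposition~\ref{proposition:newdef} and checks directly that $N_G^{>}(v)\subseteq F_j$ and $N_G^{<}(v)\subseteq F_i$ whenever $v\in F'_{i,j}$. Both routes hinge on the same key fact,
\[
v\in F'_{i,j}\ \Longrightarrow\ \bigl(v\in F_k\text{ if and only if }i\le k\le j\bigr),
\]
which the paper isolates and proves explicitly as statement~(\ref{statement}). With this in hand, your contiguity claim in Step~2 is actually immediate: along the staircase $\mathcal B$ both coordinates $i$ and $j$ are nondecreasing, so $\{(i,j)\in\mathcal B:i\le k\}$ is an initial segment, $\{(i,j)\in\mathcal B:j\ge k\}$ is a final segment, and their intersection is contiguous --- no appeal to property~(C) is needed at this stage.

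There is, however, a genuine gap in Step~1, and it is exactly where property~(C) must do its work. Your coverage argument starts ``every vertex $v$ lies in some facet $F_k$, hence in $F_{k,k}=F_k\in\mathcal B$'', but $F_{k,k}$ is in general \emph{not} in $\mathcal B$: whenever $F_{k-1,k+1}\neq\emptyset$ (equivalently $n_{k-1}>k$), the staircase passes over the diagonal entry $F_{k,k}$ entirely. The paper handles this by invoking~(C): from $F_{k-1,k+1}\neq\emptyset$ one gets $F_{k,k}=F_{k-1,k}\cup F_{k,k+1}$, so $v$ falls into one of these smaller sets, and one iterates until reaching a border element. This is the real content of the covering axiom, and without it the sets in $\mathcal B$ need not cover $V(G)$. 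Your disjointness sketch is also off: along $\mathcal B$ one frequently has $i<k$ and $j<\ell$ with both $F_{i,j},F_{k,\ell}$ nonempty (e.g.\ $F_{1,1}$ and $F_{2,n_1}$ when $n_1>1$), and property~(I) only says such sets are \emph{incomparable}, not disjoint, so it does not force the product-order comparability you assert. The clean argument is to establish the displayed equivalence above first and then note that it determines $(i,j)$ uniquely from $v$. In short: move your use of~(C) from Step~2 (where it is not needed) to Step~1 (where it is essential), and replace the disjointness sketch by the key fact~(\ref{statement}); then your plan goes through.
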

\begin{proof}
$(1)$. First of all, we prove the following claim.
\begin{Claim} \label{claim} Let $F_{i,j}\neq \emptyset$ then
\[
F_i\cup F_j=\bigcup_{k=i}^j F_k=\left(\bigcup_{k=i}^{j-1} F_{i,k}\right) \cup \left(\bigcup_{k=i+1}^j F_{k,j}\right)\cup F_{i,j}.
\]
\end{Claim}

\noindent
\textit{Proof of the Claim.} Let $i\leq k\leq \ell\leq j$. Since, by assumption  $F_{i,j}\neq \emptyset$, then Proposition \ref{pro:int} implies that  $F_{k,\ell}\neq \emptyset$.\\
By condition (I) in Definition \ref{def:closed} and Proposition \ref{pro:int},
$F_{k,\ell}\subseteq F_{i,j}$ if and only if $1\leq i\leq k\leq \ell \leq j \leq r$. Hence the poset $\mathcal P_{ij}=\{F_{k,\ell}:i\leq k\leq \ell\leq j\}$, whose partial order is given by the inclusion, is the following:

\begin{center}
\setlength{\unitlength}{.25cm}
\begin{picture}(45,30)(-3,-25)

\put(0,0){\oval(6,4)}
\put(10,0){\oval(6,4)}
\put(20,0){\oval(6,4)}
\put(29.5,0){$\ldots$}

\put(40,0){\oval(6,4)}

\put(5,-5){\oval(6,4)}
\put(15,-5){\oval(6,4)}
\put(24.5,-5){$\ddots$}

\put(35,-5){\oval(6,4)}

\put(10,-10){\oval(6,4)}
\put(19.5,-10){$\ddots$}

\put(30,-10){\oval(6,4)}


\put(14.5,-15){$\ddots$}
\put(24.5,-15){$\ldots$}

\put(20,-20){\oval(6,4)}

\put(1.8,-1.8){\line(1,-1){1.41}}
\put(8.2,-1.8){\line(-1,-1){1.41}}

\put(11.8,-1.8){\line(1,-1){1.41}}
\put(18.2,-1.8){\line(-1,-1){1.41}}

\put(38.2,-1.8){\line(-1,-1){1.41}}

\put(6.8,-6.8){\line(1,-1){1.41}}
\put(13.2,-6.8){\line(-1,-1){1.41}}

\put(33.2,-6.8){\line(-1,-1){1.41}}

%
%
%
%
%
%
%
%
%
%
%
%
%
%
%
%

\put(-1,-0.4){$F_{i,i}$}
\put(7.5,-0.4){$F_{i+1,i+1}$}
\put(17.5,-0.4){$F_{i+2,i+2}$}
\put(38.5,-0.4){$F_{j,j}$}

\put(3.5,-5.4){$F_{i,i+1}$}
\put(12.5,-5.4){$F_{i+1,i+2}$}
\put(33,-5.4){$F_{j-1,j}$}

\put(8.5,-10.4){$F_{i,i+2}$}
\put(28,-10.4){$F_{j-2,j}$}

\put(19,-20.4){$F_{i,j}$}
\end{picture}
\end{center}

We observe that $\bigcup_{k=i}^j F_k=\bigcup_{F\in \mathcal{P}_{i,j}} F$. Since $F_{k-1,k+1}\neq \emptyset$, for $k=i+1,\ldots,j-1$, then by condition (C) we have $F_{k,k}=F_{k-1,k}\cup F_{k,k+1}$, that is
\[
 \bigcup_{k=i}^j F_k=\bigcup_{F\in \mathcal{P}'_{i,j}} F
\]
with $\mathcal{P}'_{i,j}=\mathcal{P}_{i,j}\setminus \{F_k:k=i+1,\ldots,j-1\}$.
By similar argument we may subtract all the redundant elements $F_{k,\ell}$ with $i<k<\ell<j$. Hence
\[
 \bigcup_{k=i}^j F_k=\left(\bigcup_{k=i}^{j-1} F_{i,k}\right) \cup \left(\bigcup_{k=i+1}^{j} F_{k,j}\right)\cup F_{i,j}=F_i\cup F_j,
\]
and Claim \ref{claim} is proved.


Let $\mathcal P=\{F_{i,j}:1\leq i\leq j\leq r\}$ be the poset induced by the inclusion. We say that an element $F_{i,j}\in \mathcal P$ is an \textit{inner} element if $F_{i-1,j+1}\in \mathcal P$ and  $F_{i-1,j+1}\neq \emptyset$. Otherwise
an element of $\mathcal P$ is said to be \textit{border} element.\\
We observe that the border elements are exactly  the elements of $\mathcal{B}$ described in Lemma \ref{lem:increasing}, and
\[
 V(G)=\bigcup_{F_{i,j}\in \mathcal B}  F_{i,j}.\hspace{1cm} (*)
\]
In fact if $v\in V(G)$, then $v\in F_{k,k}\in \mathcal F(\Delta(G))$. If $F_{k,k}\in \mathcal B$ we have nothing to prove. Suppose $F_{k,k}\notin \mathcal B$ then $F_{k-1,k+1}\neq \emptyset$ and, since $\Delta(G)$ is closed  by property $(C)$ $F_{k,k}=F_{k-1,k}\cup F_{k, k+1}$. We may assume $v\in F_{k-1,k}$. If $F_{k-1,k}\notin \mathcal B$ applying the same argument after a finite number of steps  we obtain $v\in F_{i,j}\in \mathcal B$.
If we remove the redundant elements in $(*)$ we obtain
\[
 V(G)=\bigcup_{F_{i,j}\in \mathcal B}  F'_{i,j}\hspace{1cm} (**)
\]
where $F'_{i,j}=F_{i,j}\setminus (F_{i-1,j} \cup F_{i,j+1})$. We observe the following
\begin{equation}
\label{statement}
 \mbox{if }v\in F_{i,j}', \mbox{ then }v\in F_k \mbox{ if and only if }k=i,\ldots,j.
\end{equation}
This assertion can be deduced from the structure of the poset $\mathcal P$. For sake of completeness we give a direct proof. Since $v\in F'_{i,j}$ then $v\in F_{i,j}$ and by Proposition \ref{pro:int}, $v\in F_k$ with $k=i,\ldots,j$. Suppose that $v\in F_{\ell}$, with $\ell>j$. Then $v\in F_i\cap F_{\ell}=F_{i,\ell}$. Therefore $v\in F_{i,\ell}\subsetneq F_{i,j}$  and this is a contradiction since $v\in F_{i,j}\setminus F_{i,j+1}$ and $F_{i,j+1}\supseteq F_{i,\ell}$.\\
By (\ref{statement}), it easily follows  that
\[
\mathcal F=\{ F'_{i,j}\}_{F_{i,j}\in \mathcal B},
\]
is a partition of $V(G)$.
\par\noindent
$(2)$. We prove that $G$ is closed with respect to the labelling induced by the ordering defined in the statement. By Proposition \ref{proposition:newdef} it is sufficient to prove that for every $v\in V(G)$, $N_G^<(v)$, $N_G^>(v)\in \Delta(G)$. Since $v\in V(G)$, then $v\in F'_{i,j}\in \mathcal F$. We claim that $N_G^>(v)\subseteq F_j$, $N_G^<(v)\subseteq F_i$.\\
Let $\{v,w\}\in E(G)$ with $v<w$, we want to prove that $\{v,w\}\subseteq F_j$. Since $v\in F'_{ij}$ by (\ref{statement}) the only cliques containing $v$ are $F_i,\ldots,F_j$.
Therefore, since $\{v,w\}$ is contained in a clique of $G$, then $\{v,w\}\subseteq F_i\cup F_{i+1}\cup\ldots\cup F_j$. By Claim \ref{claim}  $\{v,w\}\subseteq F_i\cup F_j$. Since $v<w$, we have the following cases:
\begin{enumerate}
 \item [(a)] $w\in F_{i,j}'$;
 \item [(b)] $w\in F_{k,\ell}'$, with $k>i$;
 \item [(c)] $w\in F_{k,\ell}'$, with $k=i$ and $\ell>j$.
\end{enumerate}
(a). Obvious. (b) If $w\in F_{k,\ell}'$ with $k>i$, then we have that $w\not\in F_i$, by (\ref{statement}). Hence $w\in F_j$. (c) If $w\in F_{i,\ell}'$ with $\ell>j$, then we have that $w\in F_j$, by (\ref{statement}).

By the same argument we prove that $N_G^<(v)\subset F_i$.
\end{proof}

\begin{cor} \label{Cor1} Let $G$ be a graph.
The following conditions are equivalent:
\begin{enumerate}
 \item[(1)] The graph $G$ is closed on $[n]$.
 \item[(2)] The clique complex $\Delta(G) $ is closed.
 \item[(3)] The binomial edge ideal $J_G$ has a quadratic Gr\"{o}bner basis.
    \end{enumerate}
\end{cor}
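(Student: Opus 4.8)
The plan is to prove Corollary \ref{Cor1} simply as a round-trip through the three equivalences that have been established (or nearly established) in the body of the paper, so that the proof is essentially a bookkeeping argument. The cleanest presentation is to show $(1)\Rightarrow(3)$, $(3)\Rightarrow(1)$, $(1)\Rightarrow(2)$, and $(2)\Rightarrow(1)$; the first two are exactly Theorem \ref{th:qua}, so nothing new is needed there, and the implication $(1)\Rightarrow(2)$ is precisely Theorem \ref{the:main}. The only implication that is not literally a restatement of an earlier labelled result is $(2)\Rightarrow(1)$, and here the plan is to invoke Theorem \ref{th:po}: if $\Delta(G)$ is closed, then Theorem \ref{th:po}(2) exhibits an explicit total order on $V(G)$ with respect to which $G$ is closed, which by the definition of a closed graph (Definition after Theorem \ref{th:qua}, i.e.\ ``closed if there is \emph{some} labelling for which it is closed'') gives that $G$ is closed on $[n]$ after relabelling. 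I would also remark that Theorem \ref{th:po} implicitly requires $\Delta(G)$ to be a linear quasi-tree so that the leaf order $F_1,\dots,F_r$ and the set $\mathcal B$ make sense, but ``closed'' complexes are by Definition \ref{def:closed} linear quasi-trees, so this is automatic.

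Concretely, I would write: \emph{Proof.} $(1)\Leftrightarrow(3)$ is Theorem \ref{th:qua}. $(1)\Rightarrow(2)$ is Theorem \ref{the:main}. It remains to prove $(2)\Rightarrow(1)$. Assume $\Delta(G)$ is closed; in particular it is a linear quasi-tree with leaf order $F_1,\dots,F_r$, so Lemma \ref{lem:increasing} applies and $\mathcal B$ is well defined. By Theorem \ref{th:po}, the family $\mathcal F=\{F'_{i,j}\}_{F_{i,j}\in\mathcal B}$ is a partition of $V(G)$, and $G$ is closed with respect to the total order on $V(G)$ described there. Hence there exists a labelling of $V(G)$ for which $G$ is closed, i.e.\ $G$ is closed on $[n]$. $\qed$

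The main (and really only) obstacle is making sure the hypotheses of each cited theorem are met before invoking it. The subtle point is that $G$ might not be connected, whereas Theorem \ref{th:inter}, Proposition \ref{pro:lqt}, Theorem \ref{the:main} and Theorem \ref{th:po} are all stated under the running assumption (made at the start of Section \ref{sec:closed}, ``we assume from now on that the graph $G$ is connected'') that $G$ is connected; so I would either add the sentence ``since a graph is closed iff each connected component is closed, and $\Delta(G)$ is closed iff $\Delta(H)$ is closed for each component $H$, we may assume $G$ is connected'' at the start of the proof of $(2)\Rightarrow(1)$, or note that $J_G$ decomposes as a sum over components and reduce $(3)$ componentwise as well. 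A second, very minor point to double-check is that the definition of a closed graph being ``closed'' (as a complex) in Definition \ref{def:closed} is compatible with the output of Theorem \ref{th:po}, i.e.\ that the leaf order produced from the linear quasi-tree structure is the one meant in Definition \ref{def:closed} — but this is guaranteed since Theorem \ref{th:po} takes the leaf order of $\Delta(G)$ as its input. Beyond these hypothesis checks, there is no real mathematical content in the corollary: it is a corollary precisely because all the work has been front-loaded into Theorems \ref{th:qua}, \ref{the:main}, and \ref{th:po}.
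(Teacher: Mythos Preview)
Your proposal is correct and matches the paper's own proof, which simply states that the equivalence follows from Theorems \ref{the:main}, \ref{th:po}, and \ref{th:qua}. Your added remarks on connectedness and on checking the hypotheses of Theorem \ref{th:po} are reasonable due diligence, but the core argument is exactly the same bookkeeping of implications as in the paper.
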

\begin{proof} The equivalence follows from Theorems \ref{the:main}, \ref{th:po} and \ref{th:qua}.
\end{proof}
\section*{Acknowledgements} The authors wish to thank Prof.~J\"urgen Herzog for much useful advice and suggestions. We are also grateful for the referee's careful reading and useful comments.

\bibliographystyle{plain}

\end{document}